\newcounter{iii}                                            
\theoremstyle{plain}
\newtheorem{theorem}{Theorem}
\newtheorem{lemma}{Lemma}
\newtheorem{fact}{Fact}
\newtheorem{problem}{Problem}
\newtheorem{conjecture}{Conjecture}
\theoremstyle{definition}
\author{Alexandr Polyanskii}
\title{Helly-type theorem for eigenvectors}
\date{\today}
\begin{document}
\maketitle
\abstract{We prove that if any $\lfloor3d/2 \rfloor$ or fewer elements of a finite family of linear operators $\mathbb K^d\to \mathbb K^d$ ($\mathbb K$ is an arbitrary field) have a common eigenvector then all operators in the family have a common eigenvector. Moreover, $\lfloor 3d/2\rfloor$ cannot be replaced by a smaller number. Also, we study the following problem, achieving partial results: prove that if any $l=O(d)$ or fewer elements of a finite family of linear operators $\mathbb K^d\to \mathbb K^d$ have a common non-trivial invariant subspace then all operators in the family have a common non-trivial invariant subspace. 
}
	
 \section{Introduction}
	We denote the set of numbers $\{a,a+1,\dots, b\}$ by $[a,b]$, where $a<b$ are positive integer numbers. Also, we use the following notation $[b]:=[1,b]$.
    
The well-known Helly's theorem~\cite{Helly} claims.
\begin{theorem}
\label{theorem:helly}
Let $\mathcal F$ be a finite family of convex sets in $\mathbb R^d$. If every $d + 1$ or fewer elements of $\mathcal F$ intersects, then all the sets in the family $\mathcal F$ intersect.
\end{theorem}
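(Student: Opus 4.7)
The plan is to prove Helly's theorem by induction on the size $n = |\mathcal F|$, using Radon's theorem (any $d+2$ points in $\mathbb R^d$ can be partitioned into two groups whose convex hulls intersect) as the key combinatorial lemma. The base case $n = d+1$ is immediate from the hypothesis. So the content lies in the induction step, and in Radon's theorem itself if it is not already available.

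For the induction step, suppose $n \geq d+2$ and the theorem holds for families of size $n-1$. Enumerate $\mathcal F = \{F_1, \dots, F_n\}$. For each $i \in [n]$, apply the induction hypothesis to $\mathcal F \setminus \{F_i\}$: every $d+1$ of those sets intersect (by assumption on $\mathcal F$), so there exists $p_i \in \bigcap_{j \neq i} F_j$. Now apply Radon's theorem to the $n \geq d+2$ points $p_1, \dots, p_n$ to obtain a partition $[n] = A \sqcup B$ and a point $p \in \mathrm{conv}\{p_i : i \in A\} \cap \mathrm{conv}\{p_i : i \in B\}$. The claim is that $p$ lies in every $F_j$: indeed, if $j \in A$ then for every $i \in B$ we have $i \neq j$, hence $p_i \in F_j$; by convexity of $F_j$, the whole convex hull $\mathrm{conv}\{p_i : i \in B\}$ lies in $F_j$, and in particular $p \in F_j$. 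The symmetric argument handles $j \in B$.

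To establish Radon's theorem as a lemma, I would take $d+2$ points $p_1, \dots, p_{d+2} \in \mathbb R^d$ and observe that the vectors $(p_i, 1) \in \mathbb R^{d+1}$ are linearly dependent: there exist scalars $\lambda_i$, not all zero, with $\sum \lambda_i p_i = 0$ and $\sum \lambda_i = 0$. Let $A = \{i : \lambda_i > 0\}$ and $B = \{i : \lambda_i \leq 0\}$, both nonempty, and set $s = \sum_{i \in A} \lambda_i = -\sum_{i \in B} \lambda_i > 0$. Then $q := \sum_{i \in A}(\lambda_i/s) p_i = \sum_{i \in B}(-\lambda_i/s) p_i$ is the required common point of the two convex hulls.

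The main obstacle is conceptual rather than technical: one must spot the trick of producing the $n$ auxiliary points $p_i$, one for each set $F_i$, so that each $p_i$ "avoids responsibility" for $F_i$ only, and then exploit Radon's dichotomy to force a single common point. The linear-algebraic content of Radon's theorem is then routine. I would expect the whole argument to take under a page once Radon's theorem is available.
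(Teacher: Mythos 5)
Your proof is correct. Note, however, that the paper does not prove this statement at all: it is the classical Helly theorem, quoted as background with a citation to Helly's 1923 paper, so there is no proof in the paper to compare against. What you give is the standard Radon-partition argument --- induct on $|\mathcal F|$, produce for each $i$ a point $p_i\in\bigcap_{j\neq i}F_j$ via the induction hypothesis, and use Radon's theorem to extract a point common to all the sets --- and every step checks out, including the linear-algebra proof of Radon's theorem (the $n\geq d+2$ vectors $(p_i,1)$ in $\mathbb R^{d+1}$ are dependent, and splitting the coefficients by sign yields the two convex hulls with a common point; your convention of placing the zero coefficients in $B$ makes the argument work verbatim for any $n\geq d+2$ points, not just $d+2$). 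The only stylistic quibble is that you state Radon's theorem for exactly $d+2$ points but apply it to $n$ points; as your own derivation shows, this is harmless, and even the $(d+2)$-point version would suffice, since a point in the two partial convex hulls automatically lies in every $F_j$ with $j$ outside the chosen $(d+2)$-subset.
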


There is an abundance of literature on Helly-type theorems, see for example \cite{ALS, DGK63,Eck,Mat,W}. The aim of this work is to answer the following Helly-type question.
\begin{problem}
\label{problem:hellyeigenvectors}
Let $\mathcal{A}$ be a finite family of linear operators $\mathbb K^d\to \mathbb K^d$, where $\mathbb K$ is an arbitrary field and $d\geqslant 2$. Find minimal $k=HE(\mathbb K^d)$ such that if any $k$ or fewer elements of $\mathcal A$ have a common eigenvector then there is a common eigenvector for all operators in the family~$\mathcal{A}$. 
	\end{problem}	
The authors of \cite{B} considered Problem~\ref{problem:hellyeigenvectors} and gave the wrong proof of the fact that $HE(\mathbb K^d)=d+1$ (see Theorem~5.1 in~\cite{B}). Our main result is the following theorem.
\begin{theorem}
\label{theorem:hellyeigenvectors}
$HE(\mathbb K^d)=\lfloor3d/2\rfloor$ for an arbitrary field $\mathbb K$.
\end{theorem}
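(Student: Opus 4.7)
The proof splits into the lower bound $HE(\mathbb K^d)\ge\lfloor 3d/2\rfloor$ (by construction) and the matching upper bound (the Helly-type implication). For the lower bound the plan is to exhibit $\lfloor 3d/2\rfloor$ operators on $\mathbb K^d$ any $(\lfloor 3d/2\rfloor - 1)$ of which share a common eigenvector, while the whole family does not. The building blocks are two small configurations. The \emph{triangle} on $\mathbb K^2$: three operators $T_1,T_2,T_3$ and three fixed lines $\ell_1,\ell_2,\ell_3$, where $T_i$ has eigendirections exactly $\{\ell_j : j\in[3]\setminus\{i\}\}$; then $T_i,T_j$ share $\ell_k$ but no line is a common eigenvector of all three. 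The \emph{tetrahedron} on $\mathbb K^3$: four operators $B_1,\ldots,B_4$ and four lines $\ell_1,\ldots,\ell_4$ in general position, where $B_i$ has eigendirections $\{\ell_j : j\neq i\}$. For even $d=2m$ I would decompose $\mathbb K^d = V_1\oplus\cdots\oplus V_m$ into two-dimensional summands, install a triangle on each $V_j$, and extend each operator by the identity on the complement (choosing the triangle eigenvalues to avoid coincidences); for odd $d=2m+1$ I use $m-1$ triangles and one tetrahedron analogously. A common eigenvector of the whole family would have to lie in a single block and be shared by all of its triangle/tetrahedron operators, which is impossible, while removing any one operator always leaves a shared eigenvector within the affected block.

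For the upper bound the plan is induction on $d$, with base case $d=2$ handled by a direct argument: examining four near-common eigenvectors of a hypothetical bad $4$-family on $\mathbb K^2$, one deduces a contradiction from the forced parallelism of two of them. For the inductive step let $\mathcal A' = \{A_1,\ldots,A_n\}$ be a minimal family on $\mathbb K^d$ without a common eigenvector, and for each $i$ pick $v_i$, a common eigenvector of $\mathcal A'\setminus\{A_i\}$ that is not an eigenvector of $A_i$. If $v_1,\ldots,v_n$ are linearly independent then $n\le d\le\lfloor 3d/2\rfloor$; otherwise fix a minimal linear dependence involving (after reindexing) $v_1,\ldots,v_s$. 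Applying each $A_j$ to the relation $\sum c_i v_i = 0$ and using the minimality forces, for $j>s$, that $v_1,\ldots,v_s$ all lie in a single eigenspace of $A_j$, so $A_j$ acts as a scalar on $W:=\mathrm{span}(v_1,\ldots,v_s)$; while for $i\le s$, the set $\{v_j : j\in[s]\setminus\{i\}\}$ is an eigenbasis of $W$ for $A_i$. Hence $W$ is a common invariant subspace of dimension $s-1$.

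The main obstacle is converting this structural reduction into the bound on $n$, since restricting to $W$ only controls $s$ via the inductive hypothesis. The plan is to pass simultaneously to the quotient $\bar V := \mathbb K^d/W$ of dimension $d-s+1$. Each $v_j$ with $j>s$ lies outside $W$---otherwise it would already be a common eigenvector of all of $\mathcal A'$, since the $A_k$ with $k>s$ act as scalars on $W$---so its image $\bar v_j$ is a nonzero common eigenvector of $\bar{\mathcal A}\setminus\{\bar A_j\}$ in $\bar V$, placing us in the same setup one step down. Iterating yields a tower of common invariant subspaces of $\mathbb K^d$, and the inductive bound $\lfloor 3(\cdot)/2\rfloor$ applied at each level controls how many operators contribute at that level. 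Verifying that these contributions sum to at most $\lfloor 3d/2\rfloor$, i.e.\ that the factor $3/2$ is preserved when the problem is split along a common invariant subspace rather than collapsing to the weaker bound $d+1$ obtained in~\cite{B}, is the technical crux of the argument.
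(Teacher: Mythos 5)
Your lower bound is essentially the paper's construction (triangles of three lines $\mathbf e_{3i-2},\mathbf e_{3i-1},\mathbf e_{3i-2}+\mathbf e_{3i-1}$ installed on two-dimensional blocks, with one three-dimensional block when $d$ is odd), and it is fine. The upper bound, however, has a genuine gap at the iteration step, and it is not the step you flag. The first level is sound: a minimal dependence $\sum_{i=1}^{s}c_i\mathbf v_i=0$ has $s\ge 3$ (if $s=2$ then $\mathbf v_1$ is a common eigenvector of all of $\mathcal A'$), and $W=\mathrm{span}(\mathbf v_1,\dots,\mathbf v_s)$ is a common invariant subspace of dimension $s-1$ on which the $A_j$ with $j>s$ act as scalars. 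The counting you call the crux actually works out mechanically: if every level of the tower absorbs $s_r$ operators while consuming $s_r-1\ge 2$ dimensions, and the leftover images at the top are independent, then $n\le d+t$ with $t\le d/2$ levels, i.e.\ $n\le\lfloor 3d/2\rfloor$; no inductive $\lfloor 3(\cdot)/2\rfloor$ bound is needed at all. What fails is the claim that you are ``in the same setup one step down.'' At level one, every degenerate configuration (a dependence of length $2$, or some $\mathbf v_j$, $j>s$, lying in $W$) is excluded because it produces a common eigenvector of $\mathcal A'$ itself. At level two the same configurations (say $\bar{\mathbf v}_j$ proportional to $\bar{\mathbf v}_{j'}$ in $\bar V=\mathbb K^d/W$) only produce a common eigenvector of the quotient family $\bar{\mathcal A}$, and a common eigenvector of a quotient does \emph{not} lift to a common eigenvector of $\mathcal A'$; it merely yields one more common invariant subspace $W+\mathrm{span}(\mathbf v_{j'})$. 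So beyond the first level you cannot rule out a dependence of length $2$, nor an image falling into the subspace already built, and once a level may absorb two operators while consuming only one dimension the count degrades to roughly $n\le 2d$. As written, the inductive step does not close.

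For comparison, the paper avoids the tower entirely: it inducts on the number of operators, takes a maximal independent subset $\mathbf v_{k+1},\dots,\mathbf v_n$, and observes that either the supports $X_1,\dots,X_k$ of the dependent vectors are pairwise disjoint, in which case $2k\le n-k$ together with $n-k\le d$ forces $n\le 3d/2$, or two supports meet, in which case an explicit vector $\mathbf w$ supported on $X_1\cap X_2$ is exhibited and verified (operator by operator, using Facts~\ref{f2} and~\ref{f3}) to be a common eigenvector of the whole family. To rescue your approach you would need an analogous one-shot construction in place of the iteration. A smaller point: your base case $d=2$ as stated relies on two of four vectors in $\mathbb K^2$ being parallel, which fails over infinite fields; the correct observation is that an operator on $\mathbb K^2$ admitting three pairwise non-proportional eigenvectors is scalar, so a bad family of size $\ge 4$ on $\mathbb K^2$ would consist of scalar operators.
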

We prove Theorem~\ref{theorem:hellyeigenvectors} in Section~\ref{section:proofoftheoremhellyeigenvectors}.

Also, we discuss the following Helly-type question posed by Andrey Voynov~\cite{AV}.

\begin{problem}
\label{problem:invariantsubspaces}
Let $\mathcal{A}$ be a finite family of linear operators $\mathbb K^d\to \mathbb K^d$, where $\mathbb K$ is an arbitrary field and $d\geqslant 2$. Find minimal $l=HI(\mathbb K^d)$ such that if any $l$ or fewer elements of $\mathcal{A}$ have a common non-trivial invariant subspace then there is a common non-trivial invariant subspace for all operators in the family~$\mathcal{A}$. 
\end{problem}	
It is easy to see that $HI(\mathbb K^d)\leq d^2$. Indeed, suppose that we know that any $d^2$ or fewer operators in a finite family have a common non-trivial invariant subspace. Note that the space of linear operators is in fact the space of matrices of size $d\times d$ therefore among operators in the family we can find at most $d^2$ operators such that any operator in the family is equal to a linear combination of these operators (with coefficients in $\mathbb K$. Therefore, because of our assumption all elements of the family have a common non-trivial subspace.

It is not difficult to slightly improve the above estimate $HI(\mathbb K^d)\leq d^2-d+1$. We leave this improvement to reader as an exercise. Moreover, we believe that the following conjecture holds.

\begin{conjecture}\label{conjecture:invariantsubspacesarelinear}
$HI(\mathbb K^d)=O(d)$.
\end{conjecture}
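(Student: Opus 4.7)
My approach to Conjecture~\ref{conjecture:invariantsubspacesarelinear} would proceed via the classical Burnside--Wedderburn correspondence. Over an algebraically closed field $\overline{\mathbb{K}}$, a finite family of operators $\mathcal{A}\subseteq M_d(\overline{\mathbb{K}})$ has no common non-trivial invariant subspace if and only if the unital associative algebra $\langle\mathcal{A}\rangle$ it generates equals the full matrix algebra $M_d(\overline{\mathbb{K}})$. Assuming a standard Galois-descent step reduces the general-field case to the algebraically closed one (if a common invariant subspace exists over $\overline{\mathbb{K}}$, its Galois-averaged sum remains invariant and is rational over $\mathbb{K}$, provided one can avoid degenerating to a trivial subspace), Conjecture~\ref{conjecture:invariantsubspacesarelinear} reduces to the following purely algebraic statement: whenever $\mathcal{A}\subseteq M_d(\overline{\mathbb{K}})$ generates $M_d$ as a unital algebra, some subset $\mathcal{A}'\subseteq\mathcal{A}$ of size $O(d)$ already generates $M_d$.

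\textbf{Constructing a small generating subset.} I would construct $\mathcal{A}'$ greedily. Starting from $\mathcal{A}'_0=\emptyset$, at each step, while $B_i:=\langle\mathcal{A}'_i\rangle$ is a proper subalgebra of $M_d$, I adjoin any $A\in\mathcal{A}$ with $A\notin B_i$; such an $A$ must exist by hypothesis. The key quantitative input is that every proper unital subalgebra $B\subsetneq M_d(\overline{\mathbb{K}})$ has codimension at least $d-1$: indeed, any maximal proper unital subalgebra is (up to conjugation) the stabilizer of a non-trivial flag, hence block upper-triangular with block sizes $k$ and $d-k$, whose codimension is $k(d-k)\geq d-1$. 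Consequently, every intermediate $B_i\neq M_d$ satisfies $\dim B_i\leq d^2-d+1$, and in particular each time the procedure escapes from the current maximal subalgebra containing $B_i$, the dimension of $B_{i+1}$ should jump by $\Omega(d)$. Since $\dim M_d=d^2$, a bound of $O(d)$ steps would follow. As a by-product, the same codimension bound applied once yields the exercise cited in the text, $HI(\mathbb{K}^d)\leq d^2-d+1$.

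\textbf{Main obstacle.} The subtle point is that adding a single new generator $A$ to a proper subalgebra $B$ need not enlarge $\langle B,A\rangle$ by a full $d-1$ dimensions in one step; for example if $A^2\in B$ and $A$ normalizes $B$, then $\langle B,A\rangle=B+\overline{\mathbb{K}}A$, a jump of only one. The heart of the problem is therefore to show that the greedy walk spends only $O(1)$ additional steps between successive escapes from maximal subalgebras, so that the total length remains $O(d)$. I expect this will require a careful analysis of the Wedderburn structure of the intermediate $B_i$, together with the left- and right-translates $AB_i, B_iA$, to force that the two-sided ideal generated by $A$ in $\langle B_i,A\rangle$ already fills in $\Omega(d)$ new dimensions whenever $A$ escapes a maximal subalgebra. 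As supporting evidence, the base case $d=2$ follows immediately from Theorem~\ref{theorem:hellyeigenvectors}, since every non-trivial invariant subspace of $\mathbb{K}^2$ is one-dimensional and hence spanned by an eigenvector, yielding $HI(\mathbb{K}^2)=HE(\mathbb{K}^2)=3$; the case $d=3$ should be accessible by combining Theorem~\ref{theorem:hellyeigenvectors} with the dual formulation on $(\mathbb{K}^3)^*$ for the plane-invariant case, and these two cases together with the dimension-jump heuristic above form the natural starting point for the partial results announced in the abstract.
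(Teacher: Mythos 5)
You are attacking a statement that the paper itself leaves open: Conjecture~\ref{conjecture:invariantsubspacesarelinear} is not proved anywhere in the text, and the paper's only progress on it is Theorem~\ref{theorem:hellyinvariansubspaces}, which assumes the family contains an operator with $d$ distinct eigenvalues, so that every candidate common invariant subspace is a coordinate subspace in its eigenbasis, and then reduces the problem to the purely combinatorial Lemma~\ref{lemma:subsets} about unions of subsets of $[q]$, giving the bound $2d-1$. Your Burnside/algebra-generation program is a genuinely different route, but as you yourself concede it is not a proof, and the gap you flag is not a technicality: the greedy argument only guarantees that each adjoined generator increases $\dim B_i$ by at least one, which recovers nothing beyond the trivial quadratic bound (essentially the $HI(\mathbb K^d)\leq d^2-d+1$ exercise). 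The missing claim --- that each escape from a maximal subalgebra forces a dimension jump of $\Omega(d)$, and that the walk cannot linger for many steps strictly inside a fixed parabolic while climbing by one dimension at a time --- is, in this language, precisely the content of the conjecture, not a lemma one can expect to quote; your own example ($A$ normalizing $B$ with $A^2\in B$) already shows the per-step bound is tight, and nothing in the proposal rules out long chains of such steps.

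There is a second, independent gap in the reduction to $\overline{\mathbb K}$. Burnside's criterion (no common non-trivial invariant subspace iff $\langle\mathcal A\rangle=M_d$) is false over general fields: a rotation of $\mathbb R^2$ by a quarter turn has no invariant line yet generates a two-dimensional commutative algebra isomorphic to $\mathbb C$. In the direction you actually need --- the full family is irreducible over $\mathbb K$ and you must exhibit an $O(d)$-subfamily that is irreducible over $\mathbb K$ --- the family may be reducible over $\overline{\mathbb K}$, so there is no algebraically closed statement to descend from; the correct invariant-theoretic dichotomy is $\langle\mathcal A\rangle\cong M_m(D)$ for a division algebra $D$, and the Galois-averaging step does not repair this, since the sum of the conjugates of an invariant subspace can be all of $\overline{\mathbb K}^d$ while the intersection is zero. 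On the positive side, your observation that $HI(\mathbb K^2)=HE(\mathbb K^2)=\lfloor 3\cdot 2/2\rfloor=3$ is correct and verifies the conjecture for $d=2$, a case the paper does not isolate; but already for $d=3$ the two invocations of Theorem~\ref{theorem:hellyeigenvectors} (for lines, and for planes via the dual family) do not combine automatically, because different $l$-element subfamilies may have common invariant subspaces of different dimensions, so even that case remains open in your write-up.
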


The following theorem is a partial result confirming Conjecture~\ref{conjecture:invariantsubspacesarelinear}.

\begin{theorem}\label{theorem:hellyinvariansubspaces}
Suppose that $\mathcal{A}$ is a finite family of linear operators such that $A_0\in \mathcal{A}$ have $d$ eigenvectors in $\mathbb K^d$ with different eigenvalues and any $2d-1$ or fewer elements of $\mathcal{A}$ have a common non-trivial invariant subspace then there is a common non-trivial invariant subspace for all elements of $\mathcal{A}$. 
\end{theorem}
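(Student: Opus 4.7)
The plan is to diagonalize $A_0$: its $d$ eigenvectors form a basis $v_1,\dots,v_d$ of $\mathbb{K}^d$, and a Vandermonde-type argument shows that the $A_0$-invariant subspaces are exactly the $2^d$ coordinate subspaces $W_S:=\operatorname{span}(v_i:i\in S)$, $S\subseteq [d]$. Consequently, any common non-trivial invariant subspace of a subfamily containing $A_0$ must be some $W_S$ with $\emptyset\subsetneq S\subsetneq [d]$. Writing an arbitrary $A\in\mathcal A$ as a matrix $(a_{ij})$ in the basis $v_1,\dots,v_d$, the subspace $W_S$ is $A$-invariant precisely when $a_{ij}=0$ for all $j\in S$, $i\notin S$. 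I would encode this combinatorially: associate to each $A$ the directed graph $G_A$ on vertex set $[d]$ having an edge $j\to i$ exactly when $i\neq j$ and $a_{ij}\neq 0$. Then $W_S$ is $A$-invariant iff $S$ is closed under the edges of $G_A$, i.e., $i\in S$ whenever $j\in S$ and $j\to i$. Note $G_{A_0}$ is empty, as $A_0$ is diagonal in this basis.

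Suppose for contradiction that $\mathcal A$ has no common non-trivial invariant subspace. Any such would have to be some $W_S$, so the union graph $G:=\bigcup_{A\in\mathcal A}G_A$ admits no proper nonempty closed vertex set---equivalently, $G$ is strongly connected on $[d]$. The quantitative core of the proof is the classical fact that any strongly connected digraph on $d$ vertices contains a spanning strongly connected subgraph with at most $2(d-1)$ edges: fix a root $v$, form a spanning out-arborescence $T^+$ and a spanning in-arborescence $T^-$ rooted at $v$ (both exist by strong connectivity), and set $H:=T^+\cup T^-$; then $H$ has at most $2(d-1)$ edges, and any two vertices $x,y$ are linked in $H$ by the concatenation $x\to\cdots\to v\to\cdots\to y$ using $T^-$ then $T^+$.

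Apply this to $G$ to obtain such an $H$ with at most $2d-2$ edges. Since $G_{A_0}$ is empty, every edge of $H$ already lies in $G_A$ for some $A\in \mathcal A\setminus\{A_0\}$, so one can pick $\mathcal A'\subseteq \mathcal A\setminus\{A_0\}$ with $|\mathcal A'|\leqslant 2d-2$ and $H\subseteq \bigcup_{A\in\mathcal A'}G_A$. Then $\mathcal A'\cup\{A_0\}$ has at most $2d-1$ elements, so by hypothesis it admits a common non-trivial invariant subspace, which, being $A_0$-invariant, must be some $W_S$ with $\emptyset\subsetneq S\subsetneq [d]$. This $S$ is then closed under every $G_A$ for $A\in \mathcal A'$, hence under every edge of $H$, contradicting the strong connectivity of $H$ and completing the proof.

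The main obstacle is identifying the right combinatorial translation: the key leap is recognizing that, because $A_0$ has $d$ distinct eigenvalues, the problem reduces to asking whether a union of directed graphs on $[d]$ is strongly connected, and then isolating the matching quantitative input---the bound $2(d-1)$ on the edge-count of a spanning strongly connected subgraph---which is exactly what pins down the constant $2d-1$ in the statement. After that, the Vandermonde observation and the arborescence construction are routine. The hypothesis of $d$ distinct eigenvalues is genuinely used: with repeated eigenvalues $A_0$ could admit invariant subspaces beyond the $W_S$, and the reduction to a purely combinatorial question on the fixed vertex set $[d]$ would break.
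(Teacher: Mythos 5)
Your proof is correct, and it takes a genuinely different route from the paper's. Both arguments share the same starting reduction: since $A_0$ has $d$ distinct eigenvalues, every non-trivial $A_0$-invariant subspace is a coordinate subspace $\operatorname{span}(\mathbf v_i:i\in S)$, so any common non-trivial invariant subspace of a subfamily containing $A_0$ is encoded by a proper non-empty $S\subseteq[d]$. From there the paper runs an induction on $|\mathcal A|$: it produces for each $A_j$ a subspace $H_j$ invariant under all the other operators, records these as subsets $M_j\subseteq[d]$, and invokes the separate combinatorial Lemma~\ref{lemma:subsets}, whose contrapositive yields a subcollection whose union is proper and unchanged by deleting any single member; the span of that union is then invariant under the whole family. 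You instead argue directly by contraposition: encoding each operator as a digraph on $[d]$, observing that the family has no common non-trivial invariant subspace exactly when the union digraph is strongly connected, and extracting a strongly connected spanning subgraph with at most $2(d-1)$ edges as the union of an in-arborescence and an out-arborescence, which pins down a violating subfamily of size at most $2d-1$ containing $A_0$. All the individual steps you use (the characterization of $A_0$-invariant subspaces, the equivalence between closed vertex sets and invariant coordinate subspaces, the arborescence bound) check out. Your route dispenses with both the induction and Lemma~\ref{lemma:subsets}, makes the origin of the constant $2d-1$ transparent, and in fact proves the stronger ``Helly number'' form of the statement directly; what the paper's route buys is Lemma~\ref{lemma:subsets} itself as a self-contained combinatorial result with its own sharpness example.
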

We deduce Theorem~\ref{theorem:hellyinvariansubspaces} from the following combinatorial lemma, which seems to be of independent interest.
\begin{lemma}
\label{lemma:subsets}
	Let $q>0$ and $p\geq 0$ be integer numbers. Given a family $\mathcal{M}=\{M_1, \dots, M_p\}$ of non-empty and proper subsets of $[q]$. Suppose that for every non-empty subset $I\subseteq [p]$ we have either
    $$\bigcup_{i\in I}M_i=[q] \text{ or } \bigcup_{i\in I} M_i\ne \bigcup_{i\in I\setminus\{j\}}M_i$$ for some $j\in I$. Then $p\leqslant 2q-2$.   
\end{lemma}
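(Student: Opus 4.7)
I will prove the lemma by induction on $q$. The base case $q=1$ is vacuous, since $[1]$ has no non-empty proper subset, forcing $p=0=2q-2$.

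For the inductive step with $q \geq 2$, let $U=\bigcup_i M_i$. If $U \subsetneq [q]$, every $M_i \subseteq U$ with $|U| \leq q-1$; removing $U$ itself from $\mathcal{M}$ if it is present (it would be the full set in the universe $U$), the remainder satisfies the hypothesis on $U$, so induction yields $p \leq 2|U|-1 \leq 2q-3$. The substantive case is $U=[q]$, where I use a singleton reduction. Assuming $\{x\}\in \mathcal{M}$, I define $\mathcal{M}'=\{M\setminus\{x\}:M\in \mathcal{M},\ M\neq \{x\}\}$ as a family on $[q-1]$. The map $M \mapsto M\setminus\{x\}$ is injective: two colliding preimages $M, N$ would differ only in the element $x$---say $N=M\cup\{x\}$---so the triple $\{M, N, \{x\}\}$ would have proper union $N$ while each single-set removal leaves the union at $N$, violating the hypothesis. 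Second, after discarding the image $[q-1]$ if present (which occurs exactly when $C_x=[q]\setminus\{x\}\in \mathcal{M}$), the resulting family of non-empty proper subsets of $[q-1]$ still satisfies the hypothesis: a subfamily $I'$ with union different from $[q-1]$ lifts to $I\cup\{\{x\}\}\subseteq\mathcal{M}$ whose union is different from $[q]$, and the essential element guaranteed by the original hypothesis cannot be $\{x\}$ (since $x$ already lies in the union of $I$ whenever some set in $I$ contains it, and otherwise one may work with $I$ alone), so its witness $y\neq x$ descends to an essential element in $I'$. By induction, $|\mathcal{M}'|\leq 2q-3$, hence $p\leq 2q-2$.

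If $\mathcal{M}$ has no singleton, I pass to the complementary family $\mathcal{N}=\{[q]\setminus M:M\in \mathcal{M}\}$, which satisfies the dual (intersection) version of the hypothesis: for every non-empty $I$, either $\bigcap_{i\in I}N_i=\emptyset$ or some $N_j$ is essential to the intersection. A singleton $\{x\}\in \mathcal{N}$ corresponds exactly to a coatom $C_x\in\mathcal{M}$, and the entire singleton argument dualizes---injectivity and critical descent carry through with unions replaced by intersections---yielding $p\leq 2q-2$ whenever $\mathcal{M}$ contains a coatom. The main obstacle I foresee is the residual sub-case in which $\mathcal{M}$ has neither singletons nor coatoms, so $2\leq|M|\leq q-2$ for every $M$ (forcing $q\geq 4$): neither the direct nor the dual singleton reduction applies. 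My plan is to establish the sharper bound $p\leq 2q-4$ in this sub-case by analyzing, for any fixed $x$, the two critical sub-families $\mathcal{M}_{\bar x}=\{M\in \mathcal{M}:x\notin M\}$ and $\{M\setminus\{x\}:M\in\mathcal{M},\ x\in M\}$ on the universe $[q-1]$; the absence of coatoms in $\mathcal{M}$ implies $C_x\notin\mathcal{M}$, so $\mathcal{M}_{\bar x}$ consists of proper subsets of $[q-1]$ and falls under the inductive hypothesis, while the absence of singletons translates to structural restrictions on both sub-families. The delicate combinatorial bookkeeping needed to combine these, avoiding the naive doubling that would only yield $p\leq 4q-8$, is the most challenging part of the argument.
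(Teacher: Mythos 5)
Your reduction in the case where $\mathcal{M}$ contains a singleton is sound: the injectivity of $M\mapsto M\setminus\{x\}$ via the triple $\{M,\,M\cup\{x\},\,\{x\}\}$ and the descent of the essential-element condition both check out, and this is already a genuinely different route from the paper (which instead repeatedly replaces a member having a ``unique element'' by the union $\bigcup_{i\in J}M_i$ over a maximal \emph{interesting} index set $J$, until any two members are nested or cover $[q]$, and then splits off a minimal member $A$ together with the members containing $[q]\setminus A$). However, your proof is not complete, and the coatom case contains a genuine error. The dualization you invoke does not carry the injectivity argument through: for a collision $N$, $N'=N\cup\{x\}$ in the complementary family $\mathcal{N}$, the relevant triple $\{N,N',\{x\}\}$ has \emph{empty} intersection, so the intersection form of the hypothesis is vacuously satisfied there and yields no contradiction. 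Collisions genuinely occur: $\mathcal{M}=\{\{1,2\},\{1,2,4\},\{1,2,3\}\}$ on $[4]$ satisfies the hypothesis of the lemma and has the coatom $\{1,2,3\}$, yet in $\mathcal{N}=\{\{3,4\},\{3\},\{4\}\}$ both $\{3,4\}$ and $\{3\}$ map to $\{3\}$ under deletion of the element $4$. Since your count $p\leq|\mathcal{M}'|+2$ relies on injectivity, the bound does not follow. The underlying issue is that complementation converts the union hypothesis into the intersection hypothesis for the \emph{same} family, so the singleton reduction you proved under the union hypothesis cannot be applied to $\mathcal{N}$.

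The remaining case --- union equal to $[q]$ with no singleton and no coatom --- is only a plan, not a proof: you state the target bound $p\leq 2q-4$ and name the two subfamilies you would analyze, but you acknowledge yourself that the ``delicate combinatorial bookkeeping'' needed to combine them is missing, and nothing in the text supplies it. As written, the argument therefore covers only families whose union is proper or which contain a singleton, so the lemma is not established. A minor additional slip: in the singleton case the inductive hypothesis gives $|\mathcal{M}'|\leq 2(q-1)-2=2q-4$, not $2q-3$; this still yields $p\leq 2q-2$ after adding back the at most two discarded sets ($\{x\}$ and possibly $C_x$), but the arithmetic as stated does not.
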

Theorem~\ref{theorem:hellyinvariansubspaces} and Lemma~\ref{lemma:subsets} are proved in Section~\ref{section:prooftheoremhellyinvariantsubspaces}. 

{\bf Acknowledgment}. We are grateful to Roman Drnov\u{s}ek for valuable comments that helped to significantly improve the presentation of the paper. We wish to thank Ilya Bogdanov for pointing out a mistake in the original proof of~Lemma~\ref{lemma:subsets}.

\section{Proof of Theorem \ref{theorem:hellyeigenvectors}} 
\label{section:proofoftheoremhellyeigenvectors}
	
The following example shows that $HE(\mathbb K^d)\geq \lfloor 3d/2\rfloor$.
	
	\emph{Example.} Let $\mathbf e_1, \dots, \mathbf e_{3n}$ be such vectors that 
	$$\{\mathbf e_j: j\in[3n], 3\nmid j\}$$ 
	is the standard basis vectors of $\mathbb K^{2n}$ and $\mathbf e_{3i}=\mathbf e_{3i-2}+\mathbf e_{3i-1}$ for any $i\in[n]$. Let us introduce the following notations: $$H_j=span (\mathbf e_{i}:i\in[3n]\setminus \{j, j+f(j)\}), L_j=span(\mathbf e_j),$$ 
here $j\in[3n]$ and
$$f(j)=\begin{cases}1, \text{ if } j\equiv1,2&\pmod{3};
\\-2,\text{ if } j\equiv 0&\pmod{3}.
\end{cases}$$
Let us define a family $\mathcal A=\{A_1,\dots, A_{3n}\}$ of $3n$ operators. The operator $A_{j}$, $j\in [3n]$, is such that $H_j$ is the eigenspace associated with the eigenvalue $1$ and $L_j$ is the eigenspace associated with the eigenvalue $0$.
	Obviously, $A_j$ are well defined and all operators but $A_j$ have a common eigenvector $\mathbf e_{j+f(j)} $, but all operators do not have a common eigenvector. Analogously, if $d$ is odd we can construct an example of a family of $\lfloor3d/2\rfloor$ operators $\mathbb K^d\to\mathbb K^d$ such that all operators does not have a common eigenvector and all operators but any operator have.
\\[2pt]

The proof is based on induction on $n$. Suppose that Theorem~\ref{theorem:hellyeigenvectors} is shown for families of linear operators containing less then $n$ operators, where $n\geqslant \lfloor3d/2\rfloor +1$. Let us prove Theorem~\ref{theorem:hellyeigenvectors} for a family $\mathcal{A}=\{A_1, \dots, A_n\}$ of $n$ operators. By the induction hypothesis, we can find a vector $\mathbf v_i$ that is a common eigenvector of all elements of $\mathcal{A}$ but $A_i$.
		
Suppose that $\mathbf v_{k+1}, \dots, \mathbf v_{n}$ is the maximal set of linearly independent vectors among $\mathbf v_1, \dots, \mathbf v_n$, then $n-k\leqslant d$ and
\begin{equation}\label{e3}
\mathbf v_i=\sum_{j=k+1}^n \mu_{i,j}\mathbf v_j=\sum_{j\in X_i} \mu_{i,j} \mathbf v_j
\end{equation}
for any $i\in[k]$. Here $X_i$ is the set of $j\in[k+1,n]$ such that $\mu_{i,j}\ne0$.
 \begin{fact} \label{f2}
$H_i:=span (\{\mathbf v_j:j\in X_i\})$ is a common eigenspace for operators $A_m$, $m\in [n]\setminus (X_i\cup \{i\})$.
\end{fact}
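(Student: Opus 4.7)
The plan is to show directly that each operator $A_m$ with $m \in [n]\setminus (X_i\cup\{i\})$ acts as a scalar on every basis vector $\mathbf{v}_j$, $j\in X_i$, and that these scalars must all coincide; this makes $H_i$ an eigenspace for $A_m$ (with eigenvalue depending on $m$).

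Fix such an $m$. For every $j\in X_i\subseteq[k+1,n]$ the vector $\mathbf{v}_j$ is, by the induction hypothesis, a common eigenvector for all operators in $\mathcal A$ except $A_j$; since $m\notin X_i$ and $j\in X_i$ we have $m\ne j$, hence $A_m\mathbf{v}_j=\lambda_j\mathbf{v}_j$ for some $\lambda_j\in\mathbb K$. Likewise $\mathbf{v}_i$ is an eigenvector of every $A_\ell$ with $\ell\ne i$, and since $m\ne i$ we get $A_m\mathbf{v}_i=\mu\,\mathbf{v}_i$ for some $\mu\in\mathbb K$.

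Now apply $A_m$ to the expansion~\eqref{e3}. On one hand
$$A_m\mathbf{v}_i=\sum_{j\in X_i}\mu_{i,j}\,A_m\mathbf{v}_j=\sum_{j\in X_i}\mu_{i,j}\lambda_j\,\mathbf{v}_j,$$
and on the other hand
$$A_m\mathbf{v}_i=\mu\,\mathbf{v}_i=\sum_{j\in X_i}\mu\,\mu_{i,j}\,\mathbf{v}_j.$$
The vectors $\{\mathbf{v}_j:j\in X_i\}$ form a subset of the linearly independent family $\mathbf{v}_{k+1},\dots,\mathbf{v}_n$, and $\mu_{i,j}\ne 0$ for every $j\in X_i$ by the definition of $X_i$. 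Comparing coefficients forces $\lambda_j=\mu$ for all $j\in X_i$. Hence $A_m$ acts on $H_i$ as the scalar $\mu$, which is exactly the claim.

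The argument is almost purely formal once the setup is in place; the only thing one has to be careful about is the bookkeeping of indices—namely verifying that the conditions $m\ne j$ for every $j\in X_i$ and $m\ne i$ both follow from $m\in[n]\setminus(X_i\cup\{i\})$, so that each $\mathbf{v}_j$ and $\mathbf{v}_i$ is indeed an eigenvector of $A_m$. No serious obstacle is expected at this step.
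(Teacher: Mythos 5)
Your proof is correct and follows essentially the same route as the paper: apply $A_m$ to the expansion \eqref{e3}, use that each $\mathbf v_j$, $j\in X_i\cup\{i\}$, is an eigenvector of $A_m$, and compare coefficients via the linear independence of $\{\mathbf v_j : j\in X_i\}$ together with $\mu_{i,j}\ne 0$ to conclude all the eigenvalues coincide. The only difference is cosmetic: you spell out the index bookkeeping ($m\ne j$ and $m\ne i$) that the paper leaves implicit.
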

\begin{proof}[Proof of Fact \ref{f2}] Let us apply the operator $A_m$ for fixed $m\in [n]\setminus (X_i\cup \{i\})$ to (\ref{e3}). Because vectors $\mathbf v_j$, $j\in X_i\cup \{i\}$, are eigenvectors of $A_m$ thus we get
$$\lambda_{i,m} \mathbf v_i=\sum_{j\in X_i}\lambda_{j,m}\mu_{i,j}\mathbf v_j,$$
where $\lambda_{j,m}$, $j\in\{i\}\cup X_i$, is the eigenvalue corresponding to the operator $A_m$ and the eigenvector $\mathbf v_j$. Because vectors $\mathbf v_j$, $j\in X_j\subset [k+1, n]$, are linearly independent therefore $\lambda_{j,m}=\lambda$ for all $j\in\{i\}\cup X_i$. Thus the operator $A_m$ is a scalar operator on $H_i$.
\end{proof}
		
\begin{fact} \label{f3}
			Suppose that $l\in X_i$ for $i\in[k]$. Then vectors $\mathbf v_j$, $j\in \{i\}\cup [k+1,n]\setminus \{l\}$, are linearly independent.
		\end{fact}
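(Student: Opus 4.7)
The plan is to directly verify linear independence by a short substitution argument, using only equation~(\ref{e3}) and the linear independence of the basis vectors $\mathbf v_{k+1}, \dots, \mathbf v_n$. The key observation is that by the definition of $X_i$, the coefficient $\mu_{i,l}$ is nonzero precisely because $l\in X_i$; this is exactly what prevents $\mathbf v_i$ from lying in the span of $\mathbf v_j$ for $j\in[k+1,n]\setminus\{l\}$.

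Concretely, I would consider a hypothetical linear relation
\[
\alpha\,\mathbf v_i+\sum_{j\in[k+1,n]\setminus\{l\}}\beta_j\,\mathbf v_j=0,
\]
and substitute $\mathbf v_i=\sum_{j\in X_i}\mu_{i,j}\mathbf v_j$ from (\ref{e3}) to rewrite it purely in terms of $\mathbf v_{k+1},\dots,\mathbf v_n$. The coefficient of $\mathbf v_l$ in the resulting expression is $\alpha\mu_{i,l}$, since $l\in X_i$ contributes through the first sum while the second sum skips $l$.

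Since $\mathbf v_{k+1},\dots,\mathbf v_n$ are linearly independent by the choice made above equation (\ref{e3}), every coefficient in this expansion must vanish. In particular $\alpha\mu_{i,l}=0$, and as $\mu_{i,l}\ne 0$ by the definition of $X_i$, we conclude $\alpha=0$. The remaining relation $\sum_{j\in[k+1,n]\setminus\{l\}}\beta_j\mathbf v_j=0$ then forces all $\beta_j=0$, again by linear independence of the $\mathbf v_j$'s.

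There is no real obstacle here: the statement is essentially a consequence of the fact that in a basis expansion, a coordinate vector can be swapped in for any basis vector that appears with a nonzero coefficient, and this is the only content of the Fact. The only thing to be careful about is to track the role of $l\in X_i$ correctly, i.e. that $\mathbf v_l$ appears on the right-hand side of (\ref{e3}) but is excluded from the second sum, so its coefficient in the combined expression is indeed $\alpha\mu_{i,l}$ and nothing else.
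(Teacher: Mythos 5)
Your proof is correct: the substitution of $\mathbf v_i=\sum_{j\in X_i}\mu_{i,j}\mathbf v_j$ into a hypothetical relation, isolating the coefficient $\alpha\mu_{i,l}$ of $\mathbf v_l$, and using $\mu_{i,l}\neq 0$ is exactly the standard exchange argument intended here. The paper dismisses this as ``a simple exercise,'' and your write-up is precisely the solution to that exercise, with the one subtle point (that $\mathbf v_l$ receives a contribution only from the first term) correctly identified.
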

		
\begin{proof}[Proof of Fact \ref{f3}] 
A simple exercise.
\end{proof}
If $X_i=\emptyset$ for some $i\in [k]$ then $\mathbf v_i=\mathbf 0$, i.e. we get the contradiction because eigenvectors are non-trivial vectors. If $X_i=\{m\}$ for some $i\in [k], m\in [k+1, n]$ then $\mathbf v_i=\mu_{i,m} \mathbf v_m$, i.e. $\mathbf v_i$ is a common eigenvector for all elements of $\mathcal{A}$. Therefore, $|X_i|\geqslant 2$,  thus without loss of generality we have $X_{1,2}=X_1\cap X_2\ne \emptyset$. Indeed, otherwise
$$2k\leqslant \left|\bigcup_{i=1}^k X_i\right|\leqslant |[k+1,n]|= n-k, \text{ i.e. } $$ 
$$n-d\leqslant k\leqslant n/3,\ 2n/3\leqslant d,$$
		the last inequality contradicts our assumption $n\geqslant \lfloor3d/2\rfloor+1$.
		
Fix $l\in X_{1,2}=X_1\cap X_2\ne 0$. Summing (\ref{e3}) for $i=1$ and $i=2$ with proper coefficients, we get the following equality
\begin{multline}\label{e4}
		\mu_{2,l}\mathbf v_1 -\mu_{1,l}\mathbf v_2=\sum_{j\in X_1\setminus X_{1,2}} \mu_{2,l}\mu_{1,j} \mathbf v_j-\sum_{j\in X_2\setminus X_{1,2}} \mu_{1,l}\mu_{2,j} \mathbf v_j+\\
		+\sum_{j\in X_{1,2}}(\mu_{2,l}\mu_{1,j}-\mu_{1,l}\mu_{2,j}) \mathbf v_j.
\end{multline}
		Denote by $X_0$ the set of such $j\in X_{1,2}$ that $\mu_{2,l}\mu_{1,j}=\mu_{1,l}\mu_{2,j}$, i.e. the set of such $j\in X_{1,2}$ that coefficients corresponding $\mathbf v_j$ in (\ref{e4}) are equal to $0$. Note that $l\in X_0.$ 
		Let us write (\ref{e4}) in the following way
		\begin{equation}\label{e5}
		\mathbf v_1=\sum_{j\in (\{2\}\cup X_1\cup X_2)\setminus X_0} \beta_j \mathbf v_j,
		\end{equation}
		where $\beta_j\ne 0$ for all $j\in (\{2\}\cup X_1\cup X_2)\setminus X_0$. Applying $A_l$ to (\ref{e5}) and using that $\mathbf v_j$, $j\in (\{1,2\}\cup X_1\cup X_2)\setminus X_0$, are eigenvectors of $A_l$ we get
		$$\lambda_{1,l} \mathbf v_1=\sum_{j\in (\{2\}\cup X_1\cup X_2)\setminus X_0} \lambda_{j,l}\beta_j \mathbf v_j,$$
		where $\lambda_{j,l}$, $j\in (\{1,2\}\cup X_1\cup X_2)\setminus X_0$, is the eigenvalue corresponding the operator $A_l$ and $\mathbf v_j$.
		By Fact \ref{f3} vectors $\mathbf v_j$, $j\in (\{2\}\cup X_1\cup X_2)\setminus X_0\subset (\{2\}\cup [k+1,n])\setminus \{l\}$ are linearly independent therefore we have that
		$A_l$ is a scalar operator on $span (\{\mathbf v_j: j\in (\{1,2\}\cup X_1\cup X_2)\setminus X_0\})$ associated with the eigenvalue $\lambda=\lambda_{1,l}$, i.e.
\begin{gather*} 
\mathbf w=\mu_{2,l}\mathbf v_1 -\sum_{j\in X_1\setminus X_0} \mu_{2,l}\mu_{1,j}\mathbf v_j=\sum_{j\in X_0} \mu_{2,l}\mu_{1,j} \mathbf v_{j}=\\
=\sum_{j\in X_0} \mu_{1,l}\mu_{2,j} \mathbf v_{j}=
\mu_{1,l}\mathbf v_2 -\sum_{j\in X_2\setminus X_0} \mu_{1,l}\mu_{2,j}\mathbf v_j
\end{gather*}
is also an eigenvector of $A_l$ associated with the eigenvalue $\lambda$. Analogously, $\mathbf w$ is an eigenvector of $A_{l'}$ for every  $l'\in X_0$. Because of $\mathbf w\in H_1\cap H_2$ thus by Fact \ref{f2} $\mathbf w$ is an eigenvector of $A_j$ for any $j\in [n]\setminus X_{1,2}$. 

Lastly, let us show that $\mathbf w$ is an eigenvector of any $A_m$, $m\in X_{1,2}\setminus X_0$. 
Again summing \eqref{e3} for $i=1$ and $i=2$ with proper coefficients, we have
\begin{multline}\label{e6}
\mu_{2,m} \mathbf v_1-\mu_{1,m} \mathbf v_2=\sum_{j\in X_1\setminus X_{1,2}} \mu_{2,m}\mu_{1,j} \mathbf v_j-\sum_{j\in X_2\setminus X_{1,2}} \mu_{1,m}\mu_{2,j} \mathbf v_j+\\
+\sum_{j\in X_{1,2}}(\mu_{2,m}\mu_{1,j}-\mu_{1,m}\mu_{2,j}) \mathbf v_j.
\end{multline}
Denote by $X'_0$ the set of such $j\in X_{1,2}$ that $\mu_{2,m}\mu_{1,j}=\mu_{1,m}\mu_{2,j}$, i.e. the set of such $j\in X_{1,2}$ that coefficients corresponding $\mathbf v_j$ in (\ref{e6}) are equal to $0$. Note that $X_0\cap X_0'=\emptyset$, because otherwise we get $\mu_{2,m}\mu_{1,l'}=\mu_{1,m}\mu_{2,l'}$ and $\mu_{2,l} \mu_{1,l'}=\mu_{1,l}\mu_{2,l'}$, where $l'\in X_0\cap X_0'$, thus $\mu_{2,m}\mu_{1,l}=\mu_{1,m}\mu_{2,l}$, i.e. $m\in X_0$. We now apply the argument  used for \eqref{e4} again, with $l$ replaced by $m$, to obtain that $A_m$ is a scalar operator on $span(\{\mathbf v_j:j\in(\{1,2\}\cup X_1\cup X_2)\setminus X_0'\})$, i.e. $A_m$ is a scalar operator on $span(\{\mathbf v_j:j\in X_0\})$. Thus $\mathbf w\in span(\{\mathbf v_j:j\in X_0\})$ is an eigenvector of $A_m$.

Therefore, $\mathbf w$ is a common eigenvector of all operators $A_j$, $j\in [n]$. Theorem~\ref{theorem:hellyeigenvectors} is proved.
    
\section{Proof of Theorem \ref{theorem:hellyinvariansubspaces}} 
\label{section:prooftheoremhellyinvariantsubspaces}
\begin{proof}[Proof of Theorem \ref{theorem:hellyinvariansubspaces} using Lemma \ref{lemma:subsets}]
Suppose that we proved Theorem \ref{theorem:hellyinvariansubspaces} for any family $\mathcal{A}$ containing $A_0$ of size less then $n$, where $n\geqslant 2d$. Let us prove Theorem \ref{theorem:hellyinvariansubspaces} for a family $\mathcal A=\{A_0,A_1\dots, A_{n-1}\}$. By induction hypothesis, for any $j\in[n-1]$ there exists an invariant non-trivial subspace $H_j$ such that it is an invariant for all operators but $A_j$. 

Denote eigenvectors of $A_0$ by $\mathbf v_1,\dots, \mathbf v_d$. Since these vectors are associated with different eigenvalues, they forms a basis of $\mathbb K^d$. Therefore, for any non-trivial invariant subspace $H$ of $A_0$ there exists $I\subseteq[d]$ such that $H=span(\mathbf v_i: i\in I).$ Indeed, assume the contrary, i.e. $H\ne span(\mathbf v_i, i\in I)$ for any $I\subseteq[d]$, i.e. there are vectors in $H$ such that they could be represented as sums of eigenvectors of $A_0$ that do not belong to $H$. Choose such a vector $\mathbf v$ that it has the minimal number of terms in its representation, i.e. \begin{equation}\label{eq:sum}\mathbf v=\sum_{i=1}^k\alpha_i\mathbf v_{l_i},\end{equation} 
where $\mathbf v_{l_i}\not \in H$, $\alpha_{i}\in\mathbb K, \alpha_i\ne0,$ $i\in[k]$, and $k>1$ is minimal. Applying $A_0$ to \eqref{eq:sum} we get that \begin{equation}\label{eq:sum2}
\mathbf v'=\sum_{i=1}^k \alpha_i\lambda_{l_i} \mathbf v_{l_i}\in H,
\end{equation}
where $\lambda_{l_i}\in \mathbb K$, $i\in [k]$, is the eigenvalue of $A_0$ corresponding to the eigenvector $\mathbf v_{l_i}$. Therefore, we have that the vector 
$$\mathbf v'-\lambda_{l_k} \mathbf v=\sum_{i=1}^{k-1} \alpha_i(\lambda_{l_i}-\lambda_{l_k})\mathbf v_{l_i}\in H$$ 
can be represented as the sum of $k-1$ eigenvectors of $A_0$ that do not belong to $H$. This contradiction shows that for each $i$, $i\in[n-1]$, we can assign a non-empty subset $M_i\subset [d]$ such that $H_i=span(\mathbf v_i:i\in M_i)$ because $H_i$ are invariant subspaces of $A_0$. By Lemma~\ref{lemma:subsets} there exists a non-empty set $I\subset [d]$ such that  $$M=\bigcup_{i\in I}M_i= \bigcup_{i\in I\setminus\{j\}} M_i\ne [d]$$ for any $j\in I$, i.e. $span(\mathbf v_j:j\in M)\ne \mathbb K^d$ is a common non-trivial invariant subspace of all operators in the family~$\mathcal A$.
\end{proof}
\begin{proof}[Proof of Lemma \ref{lemma:subsets}]
The proof is by induction on $q$. For $q=1$, there is nothing to prove. Assume that the inequality $p\leq 2q-2$ is proved for $q< n$, let us prove it for $q=n$. A non-empty subset $I\subseteq[p]$ is called \emph{interesting} if $\bigcup_{i\in I} M_i\ne M_j$ for any $j\in [p]$ and $\bigcup_{i\in I}M_i\ne [q]$. Assume that there is at least one interesting subset and choose an interesting subset $J$ that has the maximal cardinality. Note that there is $k\in J$ such that $M_k$ contains an \emph{unique element}, i.e. there is $x\in M_k$ such that $x\not \in M_j$ for any $j\in I\setminus \{k\}$. Let us consider the following family $\mathcal{N} =\{N_1, \dots, N_p\}$, where $N_k=\bigcup_{i\in J} M_i$ and $N_i=M_i$ for $i\ne k$. Note that for any $l \in [p]\setminus J$ we have $N_k\subset N_l$ or $N_k\cup N_l=[q]$. In the converse case, we get the interesting subset $J\cup \{l\}$ that should have the larger cardinality then $J$. Let us check that the condition of the lemma holds for the new family. Trivially, it holds for any $I\subseteq [q]\setminus\{k\}$ and $I\subseteq J$ (because $x\in N_k$ and $x\not \in N_i$, $i\in J$). Let us check that the condition holds for $I$ such that $k, l\in I$, where $l\not \in J$. We know that the condition holds for the family $\mathcal{M}$ and $I'=I\cup J$. If $\bigcup_{i\in I'} M_i=[q]$ then $\bigcup_{i\in I}N_i=[q]$. If there is $k'\in I'$ such that $M_{k'}$ contains an unique element $y$ then, since $N_k=\bigcup_{i\in I}M_i\subset N_l=M_l$, we get $k'\not\in J$. It is easy to check that the condition holds for the family $\mathcal N$ and $I$ because $N_{k'}=M_{k'}$ has the unique element $y$.

We make such replacements of the family till it is possible. Obviously, we can not do them infinitely many times because after each replacement the total cardinality of sets increases. Finally, we get a family $\mathcal{M'}$ that for every two subsets in $\mathcal{M}'$ one contains another or their union is $[q]$. Without loss of generality assume that a subset $A=\{1,\dots, t\}\in \mathcal{M}'$ such that it does not contain another subset in $\mathcal{M}$. Denote by $K_1, \dots, K_l\in \mathcal{M}$ subsets that do not contain $A$, i.e. $[q]\setminus A\subseteq K_i$. We can certainly assume that $K_l\cap A=\emptyset$ (if for every $K_i$ we have $K_i\cap A\ne \emptyset$ then we can add the subset $[q]\setminus A$ to the family $\mathcal M'$). Note that the family $\{K_i\cap A: i\in[l-1]\}$ (where the role of $q$ is played by $k$) satisfies condition of the lemma. By the induction hypothesis, we get $l-1\leqslant 2k-2$, i.e. $l\leqslant 2k-1$. If we delete $A$ and $K_i$ from the family $\mathcal M'$ (i.e. at most $2k$ subsets) and delete elements $1,\dots , k$ from other subsets in $\mathcal{M}'$ then the new family satisfies the condition of the lemma, i.e. it has at most $2(q-k)-2$ subsets. Thus the original family $\mathcal{M}$ contains at most $2k+2(q-k)-2=2q-2$ subsets.
\end{proof}
\emph{Example.} Obviously, the following example shows that the inequality in Lemma~\ref{lemma:subsets} is sharp:
\begin{gather*}
\mathcal{M}=\{\{1\},\{1,2\}, \{1,2,3\},\dots,\{1,\dots, q-1\}\}\cup \\
\cup\{\{q\},\{q,q-1\},\{q,q-1,q-2\}, \dots,\{q,\dots,2\}\}.
\end{gather*}

\end{document}